\documentclass[11pt,a4paper]{article}
\usepackage[pagewise]{lineno}
\usepackage{amsfonts}
\usepackage{epic}
\usepackage{graphicx}
\usepackage{flafter}
\usepackage{geometry}
\usepackage[centertags]{amsmath}
\usepackage{amsfonts}
\usepackage{amssymb}
\usepackage{dsfont}
\usepackage{amsthm}
\usepackage{bbm}
\usepackage{mathrsfs}
\usepackage{epsfig}
\usepackage{color}
\usepackage{amsmath}
\numberwithin{equation}{section}
\numberwithin{figure}{section}

\setlength{\abovecaptionskip}{0pt}
\setlength{\belowcaptionskip}{0pt}
 \setcounter{topnumber}{3}
 \setcounter{bottomnumber}{2}
 \setcounter{totalnumber}{5}
 \topmargin=0pt
\date{}
\textwidth 160mm \textheight 235mm \raggedbottom
\oddsidemargin=0mm \evensidemargin=0mm
\DeclareMathAlphabet{\mathds}{OT1}{cmss}{m}{sl}
\newtheorem{theorem}{Theorem}[section]
\newtheorem{lemma}{Lemma}[section]

\begin{document}

\title{\bf{Monochromatic Regular Polygons \\in finitely colored $\mathbb{Z}_2*\mathbb{Z}_2*\mathbb{Z}_2$}}
\author{Hui Xu\ \& \ Enhui Shi} 
\date{\today}

\maketitle
\begin{abstract}
We show that for any finite coloring of the group $\mathbb{Z}_2*\mathbb{Z}_2*\mathbb{Z}_2$  and for any positive integer $k$,
there always exists a monochromatic regular $k$-gon in $\mathbb{Z}_2*\mathbb{Z}_2*\mathbb{Z}_2$ with respect to the word length metric induced by the standard generating set; the edge length of which is estimated.
\end{abstract}

\noindent\textbf{Keywords}: Free product, Cayley graph, Tree, Coloring, Ramsey theory.

\section{Introduction}

Van der Waerden's celebrated theorem states that for any finite coloring of the integer group $\mathbb Z$, there always exist monochromatic arithmetic progressions of arbitrarily length (\cite{W}). An extension of van der Waerden's theorem to finite colorings on $\mathbb Z^d$ is obtained by Gallai (\cite{R}).
Furstenberg and Weiss gave a new proof of above results, based on methods of topological dynamics---they proved the well-known
MBR theorem for commuting homeomorphisms (\cite{FW1}). The MBR theorem for nilpotent group actions and its combinatorial corollaries are established by
Bergelson and Leibman (\cite{BL1, L1}). \\

In this paper, we consider the Ramsey properties for finite colorings of nonamenable groups. We mainly concern the free
product $\mathbb{Z}_2*\mathbb{Z}_2*\mathbb{Z}_2$ of $2$-order groups, since its Cayley graph is very simple and suitable for analysing by combinatorial
methods. Reall that for any finitely generated group $G$ with a fixed generator set $S$, there always exists a word length metric on
$G$ induced by $S$. If the group $G$ is finitely colored, then a natural question is:
{\it {do there exist some symmetric geometric structures in $G$ with the same color?}}
\\

 A  {\it regular $k$-gon} in $G$ is  a sequence $(g_1,\cdots,g_k)$  of $k$ elements  such that the lengths of $g_1^{-1}g_2$, $g_2^{-1}g_3,$ $\cdots$, $g_{k-1}^{-1}g_k,$ $g_k^{-1}g_1$ are all equal. For integers $a, b, c$, the symbol $a\uparrow b$ means $a^b$; then $a\uparrow b\uparrow c$ stands for $a^{b^{c}}$.We obtained the following theorem.

\begin{theorem}\label{existence of polygon}
For any coloring of  $\mathbb{Z}_2*\mathbb{Z}_2*\mathbb{Z}_2$ with $r$ colors and for any positive integer $k$, there always exists a monochromatic regular $k$-gon in $\mathbb{Z}_2*\mathbb{Z}_2*\mathbb{Z}_2$, the edge length of which is no more than
\begin{equation*}
\left\{
\begin{array}{cl}
4\underbrace{\uparrow\cdots\uparrow}_{2^{r-2}} 4\uparrow d,& k  \text{ is even},\\
4\underbrace{\uparrow\dotsb\uparrow}_{2^{r-2}-1}64\uparrow 4\underbrace{\uparrow\dotsb\uparrow}_{2^{r-3}-1} 64\uparrow
\dotsb \uparrow 64\uparrow d,& k \text{ is odd}.
\end{array}
\right.
\end{equation*}
\end{theorem}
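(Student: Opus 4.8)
The plan is to work entirely inside the Cayley graph of $G=\mathbb{Z}_2*\mathbb{Z}_2*\mathbb{Z}_2$ with respect to $S=\{a,b,c\}$. Since $a^2=b^2=c^2=e$ and there are no other relations, reduced words contain no repeated adjacent letter, so this Cayley graph is the infinite $3$-regular tree $T$ and the word metric equals the graph metric; left translations are graph isometries, so $T$ is vertex-transitive. I would first record two structural facts. At any vertex $o$ the complement $T\setminus\{o\}$ has exactly three components (the \emph{branches} at $o$), each an infinite rooted binary tree, so the sphere $S(o,s)$ meets each branch in exactly $2^{s-1}$ vertices. Secondly, $T$ is bipartite with a parity function $\pi$, and if $(v_1,\dots,v_k)$ is a regular $k$-gon of edge length $\ell$ then $\pi(v_i)\neq\pi(v_{i+1})$ exactly when $\ell$ is odd; since the parities must close up cyclically, $\ell$ odd forces $k$ even. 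Contrapositively, \emph{for odd $k$ the edge length is necessarily even}, which is the structural reason the theorem splits into the two displayed cases.

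Next I would reduce the whole problem to a single combinatorial statement about one sphere. The key construction is a \emph{star}: fix a center $o$, a radius $s$, and for each $i$ choose $v_i\in S(o,s)$ in branch $\beta(i)\in\{1,2,3\}$. If $\beta(i)\neq\beta(i+1)$ cyclically, then $[o,v_i]$ and $[o,v_{i+1}]$ meet only at $o$, whence $d(v_i,v_{i+1})=2s$; so any such system of distinct, equally colored vertices is a monochromatic regular $k$-gon of edge length $2s$. The map $\beta$ is precisely a proper coloring of the cycle $C_k$ by the three branch-labels. When $k$ is even, $C_k$ is $2$-colorable, so it suffices to find a color $\chi$ and \emph{two} branches at some $o$, each carrying at least $k/2$ vertices of color $\chi$ on a common sphere $S(o,s)$. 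When $k$ is odd, $C_k$ has chromatic number $3$, so one must use all three branches, i.e. find $\chi$ and a center whose three branches carry at least $\lceil k/2\rceil,\lceil k/2\rceil,1$ vertices of color $\chi$ on a common sphere. The radius need only be large enough for one branch to host the required distinct points, i.e. $2^{s-1}\ge k$, which is the origin of the top exponent $d\approx\log_2 k$ in both bounds; the resulting edge length is $2s$.

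I would then prove this branch-distribution statement, and the two bounds on $s$, by induction on the number of colors $r$, the engine being a color-focusing / iterated-pigeonhole argument run along a geodesic ray $u_0,u_1,u_2,\dots$ together with its pendant binary subtrees $P_1,P_2,\dots$. Inside a single branch, pigeonhole already yields, at depth about $\log_2(rm)$, a color appearing $m$ times; the genuine task is to \emph{synchronize}, making the \emph{same} color abundant at the \emph{same} radius in two different branches of one vertex. At the inductive step I would use the $(r-1)$-color hypothesis to locate deep monochromatic fans inside branches, then slide the center and match two such fans of equal color and radius by a further pigeonhole over the many candidate centers produced at the previous scale. Because each match recompiles the hypothesis at the current depth, the admissible radius is essentially the previous tower composed with itself, so the tower height roughly doubles with each new color; this is what produces the height $2^{r-2}$ and base $4$ in the even bound.

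For odd $k$ the same scheme secures two monochromatic branches, but the third branch is the sharp difficulty: it needs only a single $\chi$-vertex in the third direction, yet an adversary may steer color $\chi$ away from that direction everywhere. I would therefore add an extra layer, searching a family of re-rooted copies of the two-branch configuration and running one more focusing step to capture a $\chi$-vertex in a third direction at the matching radius; this additional gadget is what inserts the factors $64=4^3$ and yields the more intricate tower $4\uparrow\dots\uparrow 64\uparrow\dots\uparrow 64\uparrow d$ with the decreasing arrow-counts. \emph{The main obstacle throughout is exactly this synchronization}: forcing one color to be simultaneously abundant, at one radius, in the required number of branches of a common slidable center, while keeping the radius growth matched to the stated towers. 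The odd-$k$ third-branch requirement is the most demanding instance of this obstacle and is precisely what separates the two bounds.
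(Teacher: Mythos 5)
Your reduction is sound, and it is in fact exactly the paper's reduction in different clothing: the paper's ``monochromatic replica of $T^2_d$'' (Theorem 3.1) is precisely your two-branch star, since its bottom level consists of equally many same-colored vertices in each of the two branches below the replica's root, all on one sphere of the ambient tree; the paper's ``$3$-claw'' (Theorem 4.1) is your three-branch star; and the even/odd split via the chromatic number of $C_k$ is the same interleaving the paper uses in Section 5. Your bipartiteness remark (odd $k$ forces even edge length) is a nice supplement the paper leaves implicit. The problem is that everything after this reduction --- which is where all of the actual work lies --- remains a declaration of intent. What you must prove is a Ramsey theorem for colored trees, and your account of it (``locate deep monochromatic fans by the $(r-1)$-color hypothesis, then slide the center and match two fans of equal color and radius by a further pigeonhole over candidate centers'') never explains how two fans in \emph{different} branches of a \emph{common} vertex are forced to share one color at one radius. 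Pigeonhole gives abundance of some color on some sphere, but abundance inside a single branch carries no polygon (pairwise distances there are not constant), and nothing in your sketch makes the abundant color or the radius agree across branches; you correctly identify this synchronization as the main obstacle, but flagging an obstacle is not overcoming it. Nor is it clear why an $(r-1)$-color hypothesis ever becomes applicable, since the ambient coloring has $r$ colors.

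The missing engine, in the paper, has three concrete components. First, a two-color dichotomy proved by induction on depth: maintain a color-$1$ replica of a binary tree whose leaves all lie on one ambient level (this is what makes synchronization automatic); at each step either every leaf has color-$1$ descendants in both of its child subtrees at some common deeper level, so the replica extends by one level, or else, for each of $d2^d$ candidate levels, some leaf subtree has an entire descendant level colored $2$, and pigeonhole over the leaves concentrates $d$ such levels below a single vertex, yielding a color-$2$ replica of $T^2_d$ outright. Second, for $r\ge 3$ one merges the colors $\{2,\dots,r\}$ into a single color, applies the two-color result, and only then recurses with $r-1$ colors on the resulting substructure; composing $N(r-1,\cdot)$ with itself in this step is exactly what doubles the tower height and produces $2^{r-2}$. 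Third, for odd $k$, the fallback when the third branch never shows the color $\chi$ at the parity-matched radii $h_{\bar d}-2k$: those evenly spaced levels of the third branch form a $4$-ary (hence binary) tree colored with the remaining $r-1$ colors, into which one embeds a replica of a ternary tree (the paper's Lemma 4.1, giving the factors $64=4^3$) and then recurses to get a $3$-claw; it is this recursion through the ternary-in-binary embedding, not ``one more focusing step,'' that handles the adversary who steers $\chi$ away from the third direction. Without these mechanisms, your proposal restates the theorem in star form, together with the expected shape of the bounds, rather than proving it.
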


\noindent{\bf Remark.} The main idea of the proof is to find some special patterns with the same color in the Cayley graph of $\mathbb{Z}_2*\mathbb{Z}_2*\mathbb{Z}_2$. In fact, by a not difficult argument, the existence of such patterns can be deduced from the Furstenberg-Weiss theorem
for Ramsey theory on trees (\cite{FW2}); but the proof of the FW-theorem relies on probability methods which affords no further information on the
edge lengths of the regular polygons; another proof in \cite{J} is based on the Szemer\'edi's theorem. The method we used is entirely combinatorial and the edge lengths are esimated. \\

The paper is organized as follows. In section 2, we give some definitions and notations which will be used throughout the paper.
In section 3, we obtain a Ramsey property of binary trees which will be used to show the existence of monochromatic regular polygon with even number of vertices. In section 3, we deal with ternary trees and get a result available to the case of odd number of vertices. In section 4, we show the main theorem of this paper.

\section{Definitions and Notations}
Given a group $G$ and a finite generating set $S$, the\textit{\textbf{ Cayley graph}} $\Gamma_{G,S}=(V,E)$ of $G$ with respect to $S$ is defined as follows:
\begin{itemize}
  \item The set of vertices consists of all elements of $G$, i.e., $V=G$.
  \item There is an edge between two vertices $x$ and $y$ if and only if there is some $s\in S$ such that $y=sx$.
\end{itemize}
Given an element $g\in G$, its {\textit\textbf word length} $|g|$ with respect to $S$ is defined to be the shortest length of a word
$w$  over $S$ whose evaluation is equal to $g$. Given two elements $g, h \in G$, the distance $d(g,h)$ in the word metric with respect to $S$
 is defined to be $|g^{-1}h|$.  Then the Cayley graph  of $\mathbb{Z}_2*\mathbb{Z}_2*\mathbb{Z}_2$ is an infinite complete ternary tree denoted by $T_3$,
 and the distance of $g$ and $h$ is the length of the unique path from $g$ to $h$ in $T_3$.
One may consult \cite{JM} for more details about Cayley graph. \\

Let $T$ be a tree and $V(T)$ denote the set of vertices of $T$. For $x, y\in V(T)$, there is a unique path from $x$ to $y$, which is denoted by $xTy$. If we choose one vertex as special, such a vertex is then called the \textit{\textbf{root}} of $T$ and $T$ is called a \textit{\textbf{rooted tree}}.  For any $x\in V(T)$, we  define the\textit{\textbf{ height}} of $x$ as the length of the path from the root to $x$ in the sense of the number of the edges on the path.  The vertices of height of $k$ form the $k^{\text{th}}$-\textit{\textbf{level}} of $T$.  \\

We can define a tree-order on the vertices of a rooted tree $T$ with root $v$. If $y\in vTx$, then we write $x\leq y$. We shall think of this ordering as expressing height: if $x<y$ we say that $x$ lies \textit{\textbf{below}} $y$ and we also say $x$ is a \textit{\textbf{descendant}} of $y$.  More precisely, we say $x$ is a $k^{\text{th}}$-descendent of $y$, if $x$ lies below $y$ and the the length of the unique path from $x$ to $y$ is $k$.  We say  $x$ is a \textit{\textbf{child}}  of $y$ if  $x$ is a $1^{\text{st}}$-descendent of $y$.  \\
\begin{figure}[htp]
  \centering
  \includegraphics[width=13cm]{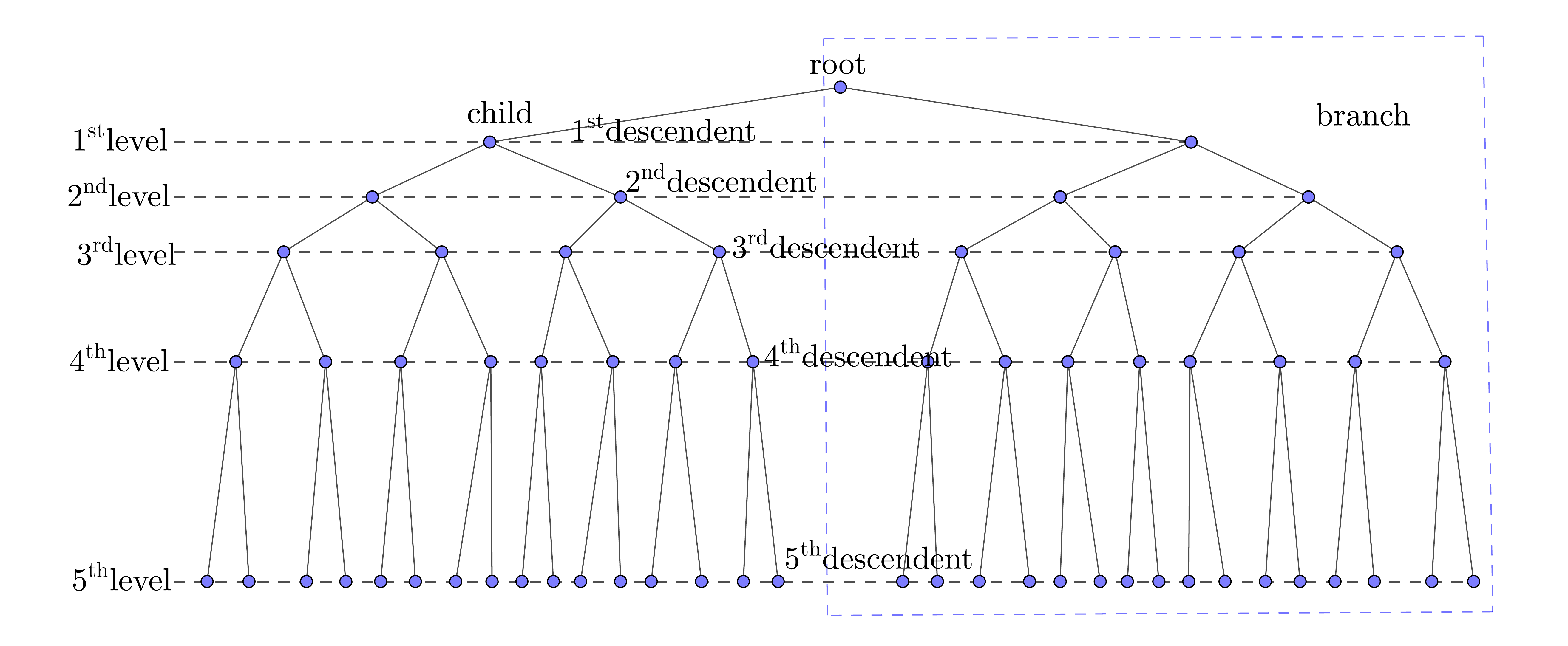}\\
  \caption{A complete binary tree of depth $6$, i.e. $T_6^2$.}\label{binarytree0color}
\end{figure}

For a positive integer $d$, let $T_d^2$ and  $T^3_{d}$  denote the complete binary and ternary tree of depth $d-1$ respectively.\\

Let $T_1$ and $T_2$ be rooted trees. We say there is a\textit{\textbf{ replica}} of $T_1$ in $T_2$ if there is an injective map $\varphi: V(T_1)\longrightarrow V(T_2)$ satisfying:
\begin{itemize}
  \item [(i)]  If $x,y\in V(T_1)$ are in the same level of $T_1$, then $\varphi(x)$ and $\varphi(y)$ are in the same level  of $T_2$.
  \item [(ii)] If $y$ and $z$ are  the descendent of  $x$ in $T_1$, then $\varphi(x)$ and $\varphi(y)$ are  the descendent of $\varphi(z)$ in $T_2$.
\end{itemize}
We call the tree induced by $\varphi(T_1)$ in $T_2$ a replica of $T_1$ in $T_2$ under $\varphi$.\\
\begin{figure}[htp]
  \centering
  \includegraphics[width=15cm]{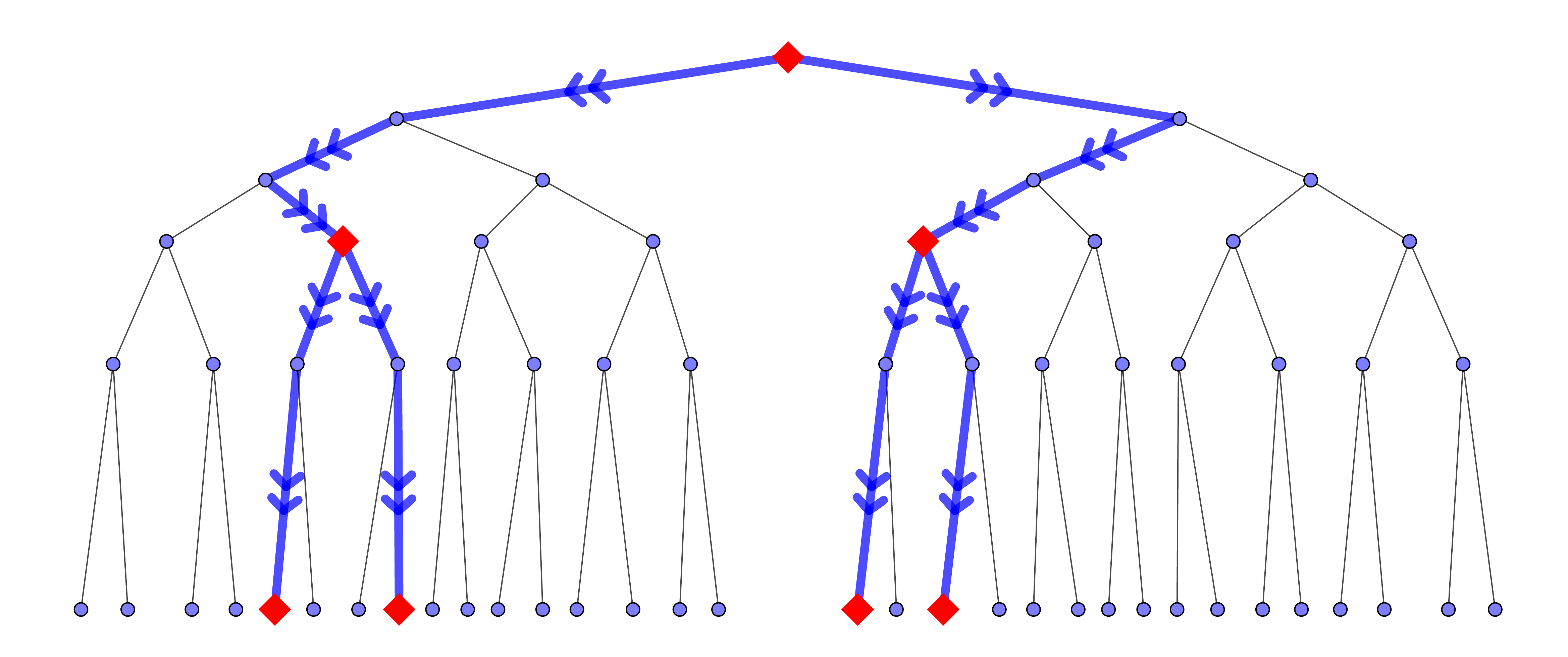}\\
  \caption{A replica of $T_3^2$ in $T_6^2$.}\label{replica}
\end{figure}

A\textit{\textbf{ monochromatic replica of $3$-claw}} of depth $d$ is the set of $2^{d-1}+1$ vertices $\{x_1,\cdots, x_{2^{d-2}}$; $ y_1,\cdots,y_{2^{d-2}};z\}$  satisfying
\begin{itemize}
  \item These vertices are colored with the same color.
  \item There is a vertex $v$ such that all vertices have equal distance to $v$ and $\{x_1,\cdots, x_{2^{d-2}}\}$, $\{y_1,\cdots,y_{2^{d-2}}\}$ and $\{z\}$ lie on three different branches of $v$.
\end{itemize}

\section{A monochromatic replica of $T_d^2$ in $T_n^2$}

\begin{theorem}\label{binary tree}
For any  positive integers $r$ and $d$, there exists a positive integer $N=N(r,d)$ such that for any $n\geq N$, there is a monochromatic replica  of $T_d^2$ in $T_n^2$ for any coloring of the vertices of $T_n^2$ with $r$ colors. Moreover, $N\leq 4\underbrace{\uparrow\cdots\uparrow}_{2^{r-2}} 4\uparrow d$.
\end{theorem}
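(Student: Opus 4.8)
The plan is to separate the two features of the desired configuration — that each level of the replica is monochromatic, and that all of its levels share one common colour — and to secure them in two stages. For the first stage I would aim only for a \emph{level-homogeneous} replica: a replica of $T_D^2$, with $D=r(d-1)+1$, on which the colour of a vertex depends only on its level in the replica. Given such a replica its $D$ levels carry an $r$-colouring, so by pigeonhole $d$ of them receive a common colour; selecting inside the big replica a sub-replica supported exactly on those $d$ levels (which is again a replica, now of $T_d^2$, since below any chosen level-$i_j$ vertex a full binary splitting down to level $i_{j+1}$ is still available) yields a genuinely monochromatic replica of $T_d^2$. This stage is routine and costs only the linear factor $r$ in the number of levels.

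The substance is producing the level-homogeneous replica, which I would build top-down, one level at a time. I would maintain a partial level-homogeneous replica through level $j$ together with a tall complete-binary \emph{reservoir} hanging below each of its $2^j$ current leaves. The step from level $j$ to level $j+1$ requires choosing a single common ambient level and, inside each reservoir, two descendants — one in each of its two branches — so that all $2^{j+1}$ newly chosen vertices receive one colour. This is precisely a quantitative, finite Halpern--L\"auchli–type statement for $2^j$ trees (the combinatorial heart that the Furstenberg--Weiss argument on trees obtains by probabilistic means, but which we need with explicit bounds). Iterating it $D-1$ times assembles the whole level-homogeneous replica, and each iteration forces the reservoirs, hence the ambient tree, to be exponentially taller — to accommodate the doubling of the frontier and leave enough room for the pigeonhole over colours and over candidate common levels. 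Chaining $D$ such exponential jumps is what produces the power-tower dependence on $d$.

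To control the one-step synchronization I would induct on the number of colours $r$. The mechanism I expect to use is a depth-profile \emph{super-colouring}: recolour the vertices of a working level by the entire colour pattern of a fixed-depth neighbourhood hanging below them, which collapses the search for synchronized same-colour descendants into an application of the $(r-1)$-colour bound to a derived colouring with boundedly many super-colours. The $r^{2^{t}}$-type blow-up of super-colours, nested inside the already tower-sized bound inherited from the previous colour, is what I expect roughly to double the rank in the hyperoperation hierarchy and thus to account for the passage from $2^{r-3}$ to $2^{r-2}$ up-arrows. The base cases are immediate: for $r=1$ the top $D$ levels already form a monochromatic replica, giving $N(1,d)=d$, and $d=1$ is a single vertex.

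The hard part — and the place where all the quantitative content sits — is the one-step synchronization: coupling the two branches below every current leaf so that they can be pushed into one common level in one common colour. It is exactly this coupling, the ``strong subtree'' constraint forbidding us to treat the two subtrees independently, that rules out a naive branchwise recursion and forces the Halpern--L\"auchli-type argument; carefully tracking its cost through the double induction on $d$ and $r$ is what I expect to yield the stated bound $4\underbrace{\uparrow\cdots\uparrow}_{2^{r-2}}4\uparrow d$.
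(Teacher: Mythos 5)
Your reduction collapses at the step you yourself identify as carrying all the content: the ``one-step synchronization'' lemma is \emph{false} as you state it, not merely hard. You ask for a single common ambient level and, inside each of the $2^j$ reservoirs, two descendants (one per branch) so that all $2^{j+1}$ newly chosen vertices receive one colour. Nothing in your construction constrains the colours strictly below the current frontier, so an adversary may colour reservoir $1$ entirely with colour $1$ and reservoir $2$ entirely with colour $2$; then any admissible choice contains a vertex of each colour, and no common level-and-colour pair exists. This is also why the problem does not reduce to a Halpern--L\"auchli statement in the way you suggest: Halpern--L\"auchli concerns colourings of level \emph{products}, and even there it yields only per-tree monochromaticity (each strong subtree ends up monochromatic in its own colour, with no coupling across trees). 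Since the lemma is false, ``iterating it $D-1$ times'' is not available, and the super-colouring device you sketch cannot rescue it. Note also that your stage-1 target is not a genuine weakening of the theorem: a monochromatic replica is trivially level-homogeneous, and your own pigeonhole shows the converse up to a factor $r$ in depth, so the proposal reduces the theorem to a statement essentially as strong, via a false lemma, with the proof of that lemma left at the level of ``I expect''.

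What makes the theorem true, and what the paper's proof supplies, is an \emph{extend-or-descend dichotomy} in place of an unconditional extension step. For $r=2$ the paper inducts on $d$: given a colour-$1$ replica of $T_{d-1}^2$ with frontier $v_1,\dots,v_{2^{d-1}}$, it examines each depth $k\in\{1,\dots,d2^d\}$ below the frontier. Either some depth $k$ permits extending every $v_i$ by colour-$1$ descendants in both of its branches (extend), or for every $k$ some frontier vertex has a branch whose entire depth-$k$ slice is coloured $2$; in the latter case pigeonhole over the $2^d$ (vertex, branch) pairs produces a single subtree containing $d$ fully colour-$2$ slices, from which a colour-$2$ replica of $T_d^2$ is read off directly (descend). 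The failure of extension is thus converted into an outright win in the other colour; this escape clause is exactly what your plan lacks, and it is also what keeps the bound elementary: $N(2,d)\leq N(2,d-1)+d2^d$, hence $N(2,d)\leq 4^d$, after which the outer induction on $r$ proceeds by merging the colours $\{2,\dots,r\}$ into one and composing bounds, giving the tower. Repairing your argument would mean inserting precisely this dichotomy, at which point it becomes the paper's proof.
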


\begin{proof}  We prove the theorem by induction on $r$. For the case that $r=1$ there is nothing to prove. Suppose the  color set is  $\{1,2,\cdots,r\}$.\\

\noindent\textbf{Case1.} Let $r=2$.  We show the theorem by induction on $d$ in this case. \\

It is trivial when $d=1$. So we assume that $d\geq 2$ and there is a positive integer $N_0$  for which we can find  a monochromatic replica of $T_{d-1}^2$ in $T_{n}^2$  for any  $n\geq N_0$. \\

Let $N=N(2,d)=N_0+d2^d $ and we will show that there is a monochromatic replica  of $T_d^2$ in $T_n^2$ for any $n\geq N$. Now we fix  $n\geq N$ and there is a monochromatic replica of $T_{d-1}^2$ in $T_{n}^2$ associated with a map $\varphi: V(T_{d-1}^2)\longrightarrow V(T_{n}^2) $. Let the image of the $(d-1)^{\text{th}}$-level of $T_{d-1}^2 $ in $T_n^2$  be $v_1,v_2,\cdots, v_{2^{d-1}}$, moreover, which are in the $N_0^{\text{th}}$-level of $T_n^2$ and we may assume that they are colored with $1$. \\

 If all children of $v_1,v_2,\cdots, v_{2^{d-1}}$  are also colored with $1$, then we obtain a monochromatic replica of $T_{d}^2$ by adding all children to the replica of $T_{d-1}^2$ in $T_n^2$. Moreover, for any $k\in\{1,2,\cdots, d2^d\}$, if there is at least one  $k^{\text{th}}$-descendent of each $v_i$  colored with $1$, then we can also find a monochromatic replica of $T_{d}^2$ in $T^2_n$ by adding these descendent. Therefore, the remaining case is that for any $k\in\{1,2,\cdots, d2^d\}$, there exists at least one $i\in \{1,2,\cdots, 2^{d-1}\}$ such that the $k^{\text{th}}$-descendent of $v_i$ are all colored with $2$. But then there is some $v_j$ and $1\leq k_1<k_2<\cdots<k_d\leq d2^d $ such that the $k_1^{\text{th}},k_2^{\text{th}},\cdots,k_d^{\text{th}}$-descendent of such $v_j$ are all colored with $2$.  We can choose a point from the $k_1^{\text{th}}$-descendent of $v_j$ denoted by $x$ and all descendent of $x$ in $(N_0+k_2),\cdots,(N_0+k_d)$-levels of $T_n^2$ then they will form a monochromatic replica of $T_d^2$ in $T_n^2$.

\begin{figure}[htp]
  \centering
  \includegraphics[width=8cm]{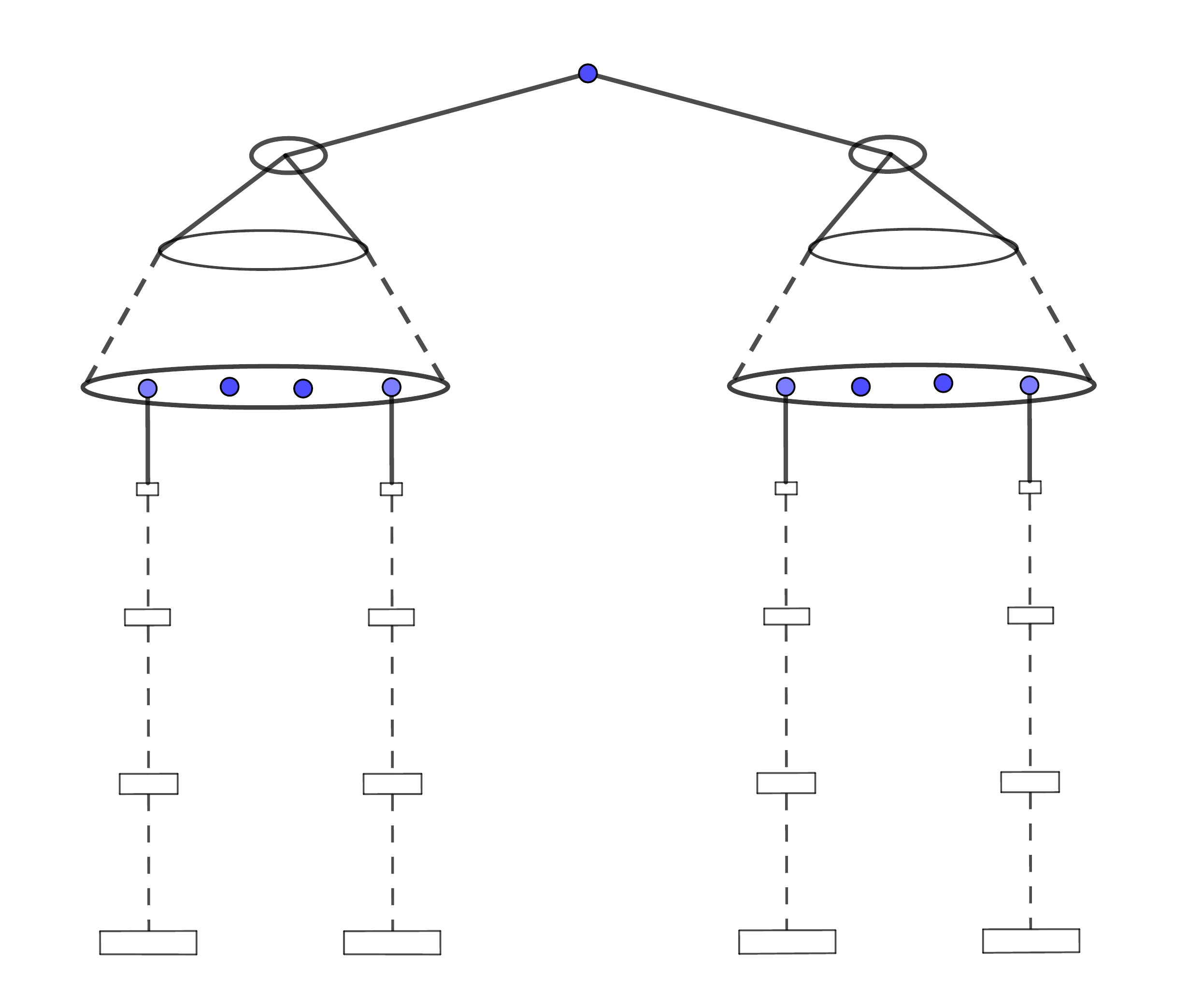}\\
  \caption{Finding monochromatic replica of $T_d$ in $T_n$.}\label{binarytreewith2color}
\end{figure}

 \noindent\textbf{Case2.} Let $r\ge 3$. Suppose that there is a positive integer $N(r-1,d)$ such that, for any $n\geq N(r-1,d)$, there is a monochromatic replica  of $T_d^2$ in $T_n^2$ for any coloring of the vertices of $T_n^2$ with $(r-1)$ colors. \\

 Let $N=N(r,d)=\max\{N(r-1, N(r-1,d)), N(2, N(r-1,d))\}$ and $n\geq N$.\\

  If we substitute the color of $2,3,\cdots, d$ by the color $0$, then $T_n^2$ is color is colored with two colors and we can find a monochromatic replica of $T_{ N(2, N(r-1,d))}^2$ in $T^2_n$. If this monochromatic replica is colored with $1$ , then it is also a monochromatic replica of $T_{ N(2, N(r-1,d))}^2$ in $T^2_n$ and we can find  a monochromatic replica of $T_{ d}^2$ in $T^2_n$ . Otherwise, it is colored with $0$, then it is color with $k-1$ colors and  there is a monochromatic replica of $T_{N(r-1, N(r-1,d))}^2$ in $T^2_n$ by Case1. Consequently, we also can find a monochromatic replica of $T_{d}^2$ in $T^2_n$ by induction hypothesis.\\

  Finally, let's conclude the estimate of $N$. From the above discussion, we know that
  \begin{displaymath}
  N(2,1)=1,~~N(2,d)\leq N(2,d-1)+d2^d.
  \end{displaymath}
  Thus
  \begin{displaymath}
  N(2,d)\leq d2^{d+1}\leq 4^d.
  \end{displaymath}
  Moreover,
  \begin{eqnarray*}
  N(r,d)&=&\max\{N(r-1, N(r-1,d)), N(2, N(r-1,d))\}\\
  &\leq& \max\{4\underbrace{\uparrow\cdots\uparrow}_{2^{r-2}} 4\uparrow d,  ~~~4\underbrace{\uparrow\cdots\uparrow}_{r-2} 4\uparrow d   \}\\
  &=& 4\underbrace{\uparrow\cdots\uparrow}_{2^{r-2}} 4\uparrow d.
  \end{eqnarray*}
   Hence we complete the proof.
\end{proof}

\section{A monochromatic replica of $3$-claw in $T_n^3$}

\begin{lemma}\label{ternary tree in binary tree}
For any positive integer $n$, there is a replica of $T_{\lfloor\frac{n}{2}\rfloor}^3$ in $T_n^2$.
\end{lemma}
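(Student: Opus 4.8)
The plan is to embed the ternary tree into the binary tree by \emph{telescoping two binary levels into one ternary level}. The key observation is that in the complete binary tree $T_n^2$ every vertex $v$ has exactly $2^2=4$ descendants lying two levels below it, and since $4\geq 3$, these four descendants leave room to host the three children of a ternary vertex. Accordingly, I would build the desired injection $\varphi\colon V(T^3_{\lfloor n/2\rfloor})\longrightarrow V(T_n^2)$ so that the $k^{\text{th}}$ level of the ternary tree is sent into the $(2k)^{\text{th}}$ level of the binary tree.

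Concretely, I would define $\varphi$ recursively on the levels of $T^3_{\lfloor n/2\rfloor}$. First send the root of $T^3_{\lfloor n/2\rfloor}$ to the root of $T_n^2$, both at level $0$. Assuming the level-$k$ vertices have already been mapped to distinct vertices at level $2k$ of $T_n^2$, map the three children (in $T^3$) of each such vertex $u$ to three of the four descendants of $\varphi(u)$ lying at level $2k+2$ of $T_n^2$, chosen arbitrarily but distinctly. This is possible precisely because a binary vertex has exactly four descendants two levels down, and $4\geq 3$.

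Next I would verify the two defining properties of a replica. Property (i) is immediate, since all level-$k$ vertices of $T^3$ are sent to level $2k$ of $T_n^2$. For property (ii), note that by construction a child is always sent to a descendant, so the ternary ancestor–descendant order is carried into the binary one; moreover the subtrees hanging below distinct level-$(2k+2)$ descendants of $\varphi(u)$ are pairwise disjoint, which simultaneously yields injectivity of $\varphi$ and the faithful preservation of the branching of siblings.

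Finally I would settle the depth bookkeeping. Using only the levels $0,2,4,\dots$ of $T_n^2$, whose maximal level is $n-1$, the construction reaches ternary level $k$ as long as $2k\leq n-1$; the number of usable levels is therefore $\lfloor (n-1)/2\rfloor+1=\lceil n/2\rceil\geq \lfloor n/2\rfloor$, which is exactly the number of levels of $T^3_{\lfloor n/2\rfloor}$. Hence the entire tree $T^3_{\lfloor n/2\rfloor}$ is accommodated. I do not expect a genuine obstacle here: the whole content is the elementary branching inequality $4\geq 3$, and the only point requiring care is the off-by-one in the level count, which the computation above resolves.
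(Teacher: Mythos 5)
Your proof is correct and is essentially the same construction that the paper conveys (only pictorially, via its figure): telescope two binary levels into one ternary level, using that every vertex of $T_n^2$ has $4\geq 3$ descendants two levels below. Since the paper's entire proof is ``obvious by the figure,'' your write-up in fact supplies the explicit recursion, injectivity check, and level bookkeeping that the paper leaves implicit.
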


\begin{proof}
The result is obvious by Figure \ref{ternary tree in binary tree}.
\end{proof}
\begin{figure}[htp]
  \centering
  \includegraphics[width=15cm]{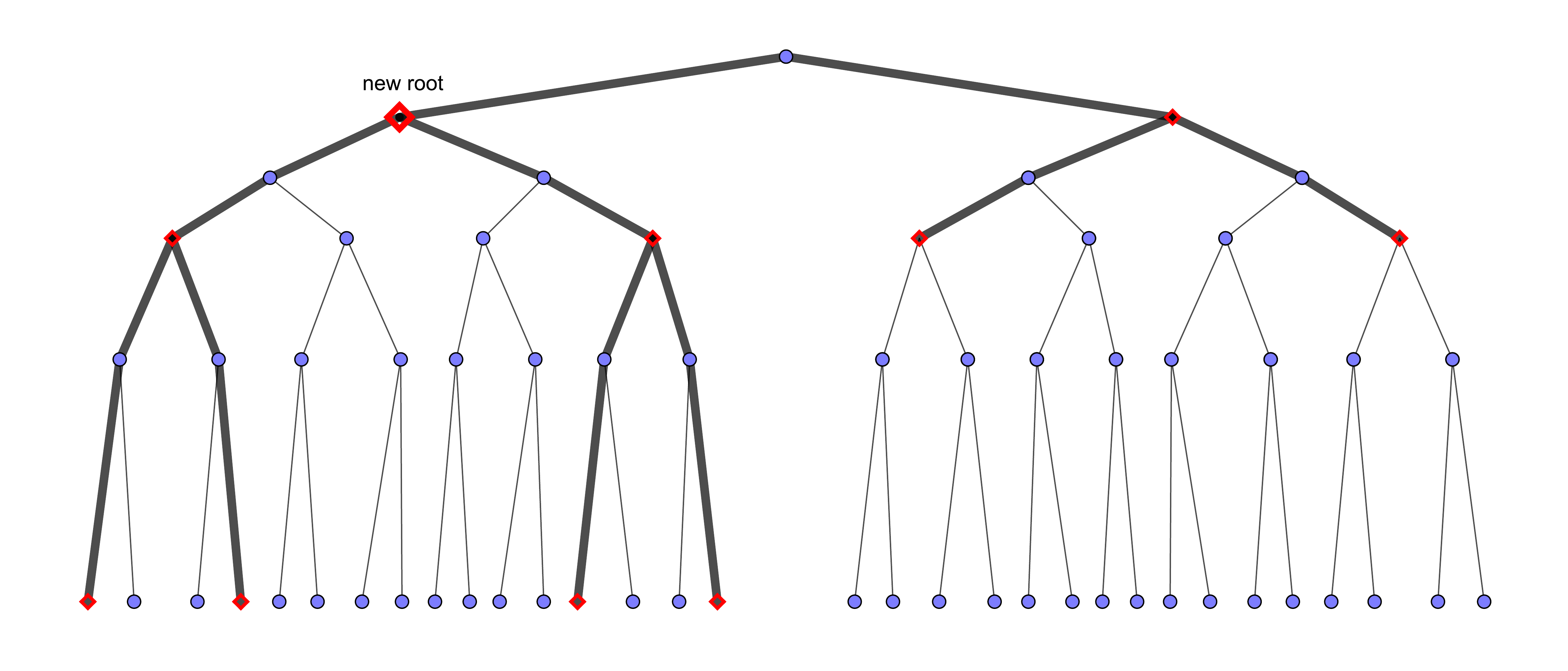}\\
  \caption{Finding $T_{\lfloor\frac{n}{2}\rfloor}^3$ in $T_n^2$.}\label{ternary in binary tree}
\end{figure}

\begin{theorem}\label{3claw}
For any  positive integers $r$ and $d$, there exists a positive integer $M=M(r,d)$ such that for any $n\geq M$, there is a monochromatic replica  of $3$-claw of depth $d$ in $T_n^3$ for any coloring of the vertices of $T_n^3$ with $r$ colors. Moreover
\begin{displaymath}
M(r,d)\leq 4\underbrace{\uparrow\dotsb\uparrow}_{2^{r-2}-1}64\uparrow 4\underbrace{\uparrow\dotsb\uparrow}_{2^{r-3}-1} 64\uparrow
\dotsb \uparrow 64\uparrow d.
\end{displaymath}
\end{theorem}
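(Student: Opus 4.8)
The plan is to reduce the problem about the ternary tree $T_n^3$ to the binary-tree result (Theorem \ref{binary tree}) that has already been established, using Lemma \ref{ternary tree in binary tree} as the bridge. The target object, a monochromatic replica of a $3$-claw of depth $d$, is essentially two branches each carrying $2^{d-2}$ leaves together with a single distinguished vertex $z$ on a third branch, all monochromatic and equidistant from a common vertex $v$. My first step is to observe that a $3$-claw of depth $d$ can be produced from a monochromatic replica of the \emph{binary} tree $T_d^2$ together with one extra monochromatic vertex sitting on a separate branch at the right height: the $2^{d-1}$ leaves of the replica of $T_d^2$ split naturally across two children-subtrees of its root, giving the $x_i$ and $y_i$, while $z$ must be found off to the side. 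So the real task is to locate a monochromatic $T_d^2$-replica \emph{inside} the ternary tree and then find room for $z$.

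To find a monochromatic replica of $T_d^2$ inside $T_n^3$, I would invoke Lemma \ref{ternary tree in binary tree} in the reverse direction from how it is stated: since $T_n^3$ contains copies of large binary trees (indeed $T_n^3$ is ``fatter'' than $T_n^2$, so embedding a binary tree into it is trivial), one restricts attention to a binary subtree $T_m^2$ sitting inside $T_n^3$ and applies Theorem \ref{binary tree} to obtain a monochromatic replica of $T_d^2$ once $m\geq N(r,d)$. The subtlety the lemma addresses is the bookkeeping needed to keep the third branch of each ternary vertex free, so that after finding the monochromatic binary replica one still has an untouched subtree hanging off the appropriate vertex in which to hunt for the monochromatic singleton $z$ at the correct level. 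Concretely I would first secure the binary replica, identify the common ancestor $v$ of its two leaf-clusters, and then pass to the untouched third branch below $v$; there one applies the pigeonhole principle (or a one-level instance of the binary-tree argument) to guarantee a vertex of the same color as the replica at exactly the level matching the leaves, supplying $z$.

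The bound on $M(r,d)$ is then obtained by composing the two reductions and tracking the depth inflation at each stage. Each application of Lemma \ref{ternary tree in binary tree} roughly halves the available depth (because embedding a ternary claw into the binary machinery costs a factor of two in height), and Theorem \ref{binary tree} contributes the tower $4\underbrace{\uparrow\cdots\uparrow}_{2^{r-2}}4\uparrow(\cdot)$ in the number of colors. Recursing on $r$ and carrying the factor-of-two losses through the iterated exponentials produces the stated expression, where each successive $64$ in the tower records the cost of one halving step combined with one layer of the binary bound; the exponents $2^{r-2}-1, 2^{r-3}-1,\dots$ come from peeling off one color at a time.

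I expect the main obstacle to be the combinatorial bookkeeping in the second paragraph: ensuring that the vertex $z$ can always be placed on a branch genuinely disjoint from the two carrying the $x_i$ and $y_i$, while simultaneously forcing it to the correct level and the correct color, without disturbing the already-constructed monochromatic binary replica. Getting the levels to line up is exactly what makes the edge lengths of the eventual polygon equal, so this alignment must be done with care rather than by a crude pigeonhole that loses control of the height.
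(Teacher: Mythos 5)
There is a genuine gap, and it sits exactly where you predicted trouble: the placement of $z$. After constructing a monochromatic (say color $1$) replica of $T_d^2$ inside $T_n^3$ and passing to the untouched third branch below its root, you propose to find a color-$1$ vertex at the level matching the leaves ``by the pigeonhole principle (or a one-level instance of the binary-tree argument)''. No such argument can work: the coloring of the third branch is completely unconstrained by what you have built so far, and an adversary may color that entire branch with the colors $2,\dots,r$ only, so a color-$1$ vertex need not exist at the required level, or indeed at any level. Flagging this as ``bookkeeping'' in your final paragraph does not resolve it; the fix is not careful alignment but a new structural idea, and without it your bound discussion is unsupported --- indeed your remark that the exponents come from ``peeling off one color at a time'' presupposes a recursion on $r$ that your construction never sets up.

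What the paper actually does is run a dichotomy that drives an induction on the number of colors. It takes the monochromatic binary replica much deeper than $d$ (depth $\overline d$ of order $3\tilde d$ with $\tilde d=M(r-1,d)$, not depth $d$), and examines \emph{many} candidate levels $h_{\overline d}-2k$, $k=0,1,\dots,2\tilde d-1$, in the third branch; the spacing $2$ appears because the center of the claw may slide $k$ steps down inside branch $1$ to a vertex $v_k$, decreasing the distance to the replica leaves by $k$ while increasing the distance to the third branch by $k$, so color $1$ at any one of these levels yields the claw directly (this is the correct generalization of your $z$-step). If \emph{no} such level contains color $1$, then $2\tilde d$ entire levels of the third branch, spaced two apart, are colored with at most $r-1$ colors; a vertex on the topmost of these levels together with all its descendants on the selected levels forms a replica of $T_{2\tilde d}^2$ colored with at most $r-1$ colors, and Lemma \ref{ternary tree in binary tree} --- used in the direction in which it is stated, ternary-into-binary, which your plan never invokes (you only use the trivial reverse containment) --- converts it into a replica of $T_{\tilde d}^3$ with at most $r-1$ colors. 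Since $\tilde d=M(r-1,d)$, the induction hypothesis (for $r=2$, monochromaticity outright) produces the desired $3$-claw of depth $d$ there. The recursion $M(r,d)\le N\bigl(r,3M(r-1,d)\bigr)$ coming from this dichotomy is precisely what generates the stated tower bound.
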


\begin{proof}  We prove the theorem by induction on $r$. For the case that $r=1$ there is nothing to prove. Suppose the  color set is  $\{1,2,\cdots,r\}$.\\

 Firstly, by Theorem \ref{binary tree}, for any positive integer $\overline{d}$, there is a positive integer $N(r,\overline{d})$ such that there is a monochromatic replica of  $T_{\overline{d}}^2$ in $T_n^3$ for any $n\geq N(r,\overline{d})$. We may assume that this monochromatic replica is colored with $1$. We let the root, saying $v_0$, of this $T_{\overline{d}}^2$ be a new root of $T_n^3$ and we  denote this new rooted tree by $T$. Moreover, we assume that the $k^{\text{th}}$-level of $T_{\overline{d}}^2$ is mapped into the ${h_k}^{\text{th}}$-level of  $T$ for $k\in \{1,2,\cdots,\overline{d}\}$. We call the branches of $T$  which the $T_{\overline{d}}^2$ is imbedded into  branch$1$ and branch$2$ and the another one left branch$3$.\\

 Let $\tilde{d}$ be a positive integer and $\overline{d}\geq 3\tilde{d}$. Now for $k\in \{0, 1,\cdots, 2\tilde{d}-1\}$, we choose a  vertex of the $k^{\text{th}}$-descendent of $v_0$ both in branch$1$ and  in the replica of the $T_{\overline{d}}^2$ that we have chosen.  We denote this vertex by $v_k$. Then there are at least $2^{\overline{d}-k-2}\geq 2^{\tilde{d}-1}$ descendent of $v_k$ colored with $1$ in  ${h_k}^{\text{th}}$-level.\\

\noindent\textbf{Case1.} Let $r=2$ and  $ \tilde{d}=d, ~\overline{d}\geq 3d$.\\

 If there is a vertex, saying $x$, colored with $1$ in the ${h_{\overline{d}}}^{\text{th}}$-level of branch$3$, then there there is a monochromatic replica $3$-claw of depth $d$ in $T_n^3$ by choosing $x$ and $v_0$ associated with its $2^{d-1}$ descendent colored with $1$ in the ${h_{\overline{d}}}^{\text{th}}$-level of branch$1$ and branch$2$. Similarly,  for $k\in \{1,2,\cdots, 2\tilde{d}-1\}$,  if there is a vertex, saying $x_k$, colored with $1$ in the ${(h_{\overline{d}}-2k)}^{\text{th}}$-level of branch$3$, then there is a monochromatic replica $3$-claw of depth $d$ in $T_n^3$ by choosing $x_k$ and $v_k$ associated with its $2^{d-1}$ descendent colored with $1$ in the ${h_{\overline{d}}}^{\text{th}}$-level of branch$1$ .\\

From the above discussion, we may assume all vertices in the ${(h_{\overline{d}}-2k)}^{\text{th}}$-level of branch$3$ are colored with $2$ for any $k\in\{0,1,\cdots 2d-1\}$. But a vertex in in the ${(h_{\overline{d}}-2(2d-1))}^{\text{th}}$-level of branch$3$ and its all descendent in ${(h_{\overline{d}}-2(2d-2))}, {(h_{\overline{d}}-2(2d-3))},\cdots, {h_{\overline{d}}}^{\text{th}}$-level form a monochromatic $T_{2d}^2$. Subsequently, by Lemma \ref{ternary tree in binary tree}, we can find a monochromatic replica of $T_d^3$ which is also true for $3$-claw of depth $d$.\\

Hence we compete the proof in the case that $r=2$ by choosing $M=M(d,2)=N(r, 3d)$.\\

\begin{figure}[htp]
  \centering
  \includegraphics[width=12cm]{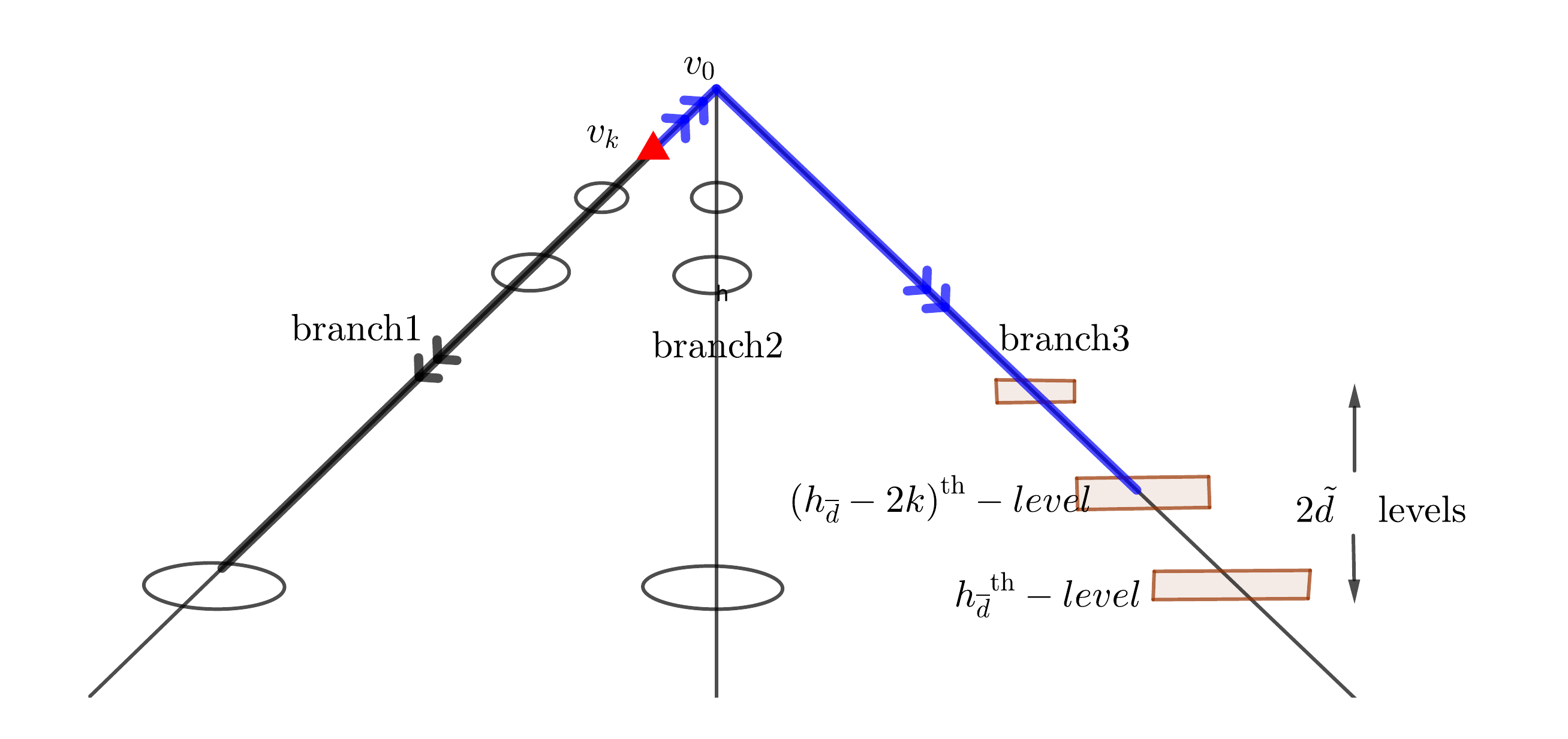}\\
  \caption{Finding $3$-claw in $T_n^3$.}\label{clawinternary}
\end{figure}

\noindent\textbf{Case2.} Let $r\ge 3$ and $\tilde{d}=M(r-1, d),  \overline{d}\geq \tilde{d}+d $.\\

By similar discussion as in Case1,  for $k\in \{1,2,\cdots, 2\tilde{d}-1\}$,  if there is a vertex, saying $x_k$, colored with $1$ in the ${(h_{\overline{d}}-2k)}^{\text{th}}$-level of branch$3$, then there is a monochromatic replica $3$-claw of depth $d$ in $T_n^3$ by choosing $x_k$ and $v_k$ associated with its $2^{d-1}$ descendent colored with $1$ in the ${h_{\overline{d}}}^{\text{th}}$-level of branch$1$. Therefore, we may assume all vertices in the ${(h_{\overline{d}}-2k)}^{\text{th}}$-level of branch$3$ are colored with $r-1$ colors $\{2,\cdots, r\}$ for any $k\in\{0,1,\cdots 2d-1\}$. But a vertex in in the ${(h_{\overline{d}}-2(2\tilde{d}-1))}^{\text{th}}$-level of branch$3$ and its all descendent in ${(h_{\overline{d}}-2(2\tilde{d}-2))}, {(h_{\overline{d}}-2(2\tilde{d}-3))},\cdots,$ $ {h_{\overline{d}}}^{\text{th}}$-level form a monochromatic $T_{2\tilde{d}}^2$. Then we can find a monochromatic replica of $T_{\tilde{d}}^3$ by  Lemma \ref{ternary tree in binary tree}.\\

Since we have chosen $\tilde{d}=M(r-1, d)$, by induction hypothesis, we can find a a monochromatic $3$-claw of depth $d$ .\\

Finally, we estimate the bound the $M(r,d)$. It is clear from the paragraph that
\begin{displaymath}
M(2,d)\leq N(2,3d)\leq 6d8^d,
\end{displaymath}
and
\begin{displaymath}
M(r,d)\leq N(r, 3 M(r-1,d)).
\end{displaymath}
So
\begin{displaymath}
M(r,d)\leq 4\underbrace{\uparrow\dotsb\uparrow}_{2^{r-2}-1}64\uparrow 4\underbrace{\uparrow\dotsb\uparrow}_{2^{r-3}-1} 64\uparrow
\dotsb \uparrow 64\uparrow d.
\end{displaymath}

 Hence we complete the proof.

\end{proof}

\section{The existence of a monochromatic regular polygon}

\noindent\textit{Proof of Theorem \ref{existence of polygon}}.\\

\noindent \textbf{Case1} $k$ is an even number.\\

By Theorem \ref{binary tree}, there is a monochromatic replica of $T_d^2$ in $T_3$.  Choose $d$ such that $2^{d-1}\geq k$. In this replica  we can  choose $\frac{k}{2}$ vertices in the outermost level from each branch. More precisely, let the root of the replica is $v$ and choose $\{x_1,\cdots,x_{\frac{k}{2}}\}$ in one branch and $\{y_1,\cdots,y_{\frac{k}{2}}\}$ in the other. Suppose $x_i$ and $y_j$ corresponds to the elements $g_{2i-1}$ and $g_{2j}$ in $\mathbb{Z}_2*\mathbb{Z}_2*\mathbb{Z}_2$ respectively. Then $(g_1,\cdots,g_k)$ forms a polygon, since the length of $g_i^{-1}g_{i+1}$ equals to the length of the path between the vertices in the Cayley graph corresponding to $g_i$ and $g_{i+1}$.\\

\begin{figure}[htp]
  \centering
  \includegraphics[width=15cm]{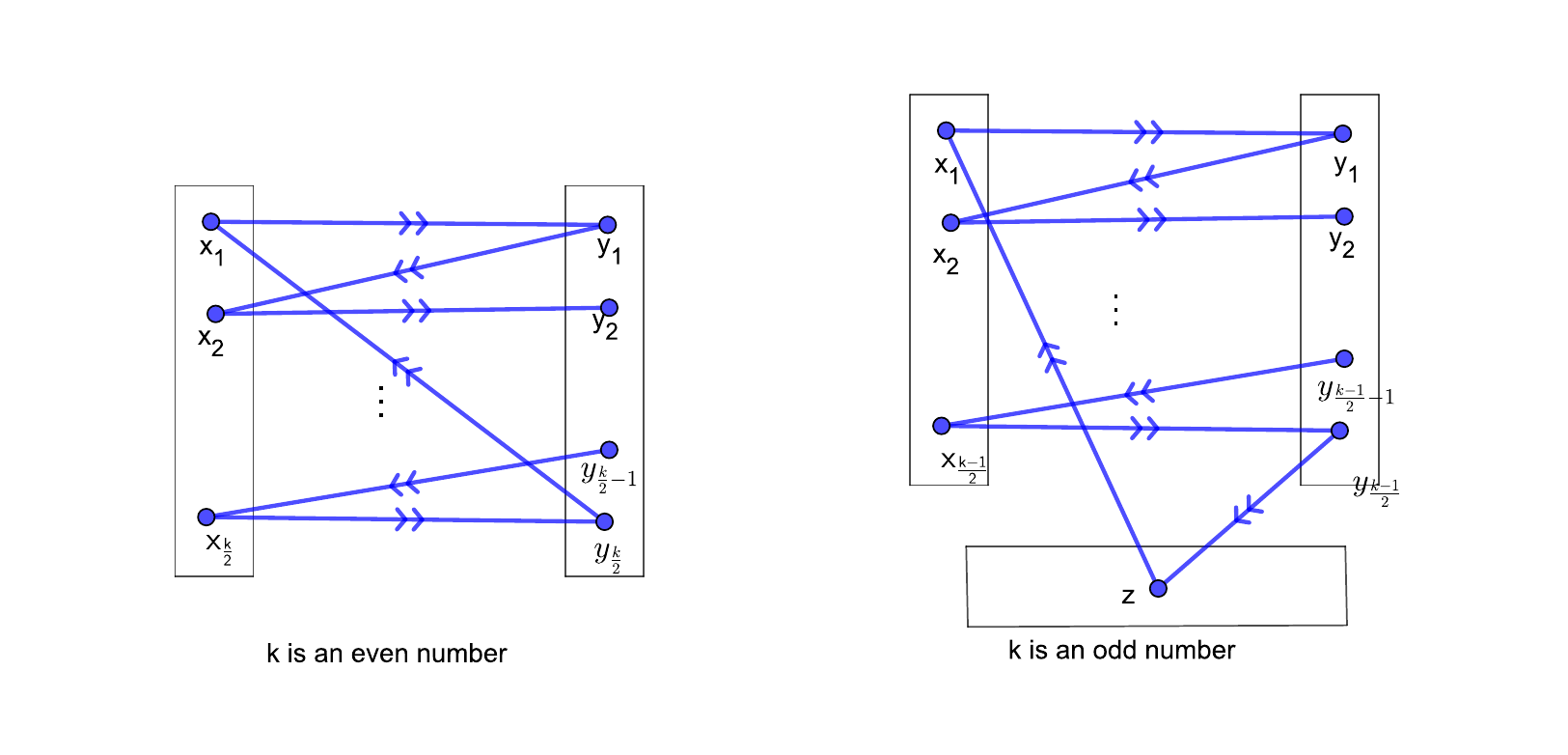}\\
  \caption{Existence of regular polygon.}
\end{figure}

\noindent \textbf{Case2} $k$ is an odd number.\\

By Theorem \ref{3claw}, there is a monochromatic replica of $3$-claw of depth $d$ in $T_3$.  Choose $d$ such that $2^{d-1}\geq k$. In this replica  we can  choose $\frac{k-1}{2}$ vertices in the outermost level from each branch. More precisely, let the root of the replica is $v$ and choose $\{x_1,\cdots,x_{\frac{k-1}{2}}\}$ in one branch and $\{y_1,\cdots,y_{\frac{k-1}{2}}\}$ in the other and $z$ in another one. Suppose $x_i$, $y_j$ ad $z$ corresponds to the elements $g_{2i-1}$, $g_{2j}$  and $g_k$ in $\mathbb{Z}_2*\mathbb{Z}_2*\mathbb{Z}_2$ respectively. Then $(g_1,\cdots,g_k)$ forms a polygon.\\
\rightline{$\Box$}

\addcontentsline{toc}{section}{Reference}
\bibliography{bib}

\vspace{5mm}

\noindent Enhui Shi, School of mathematical and sciences, Soochow University, Suzhou, 215006, P.R. China (ehshi@suda.edu.cn)\\

\noindent Hui Xu, School of mathematical and sciences, Soochow University, Suzhou, 215006, P.R. China (mathegoer@163.com)

\end{document}